\numberwithin{equation}{section}
\newtheorem{theorem}{Theorem}[section]
\newtheorem{proposition}[theorem]{Proposition}
\newtheorem{corollary}[theorem]{Corollary}
\newtheorem{definition}[theorem]{Definition}
\newtheorem{lemma}[theorem]{Lemma}
\newcommand{\C}{\mathbb{C}}
\newcommand{\R}{\mathbb{R}}
\newcommand{\T}{\mathbb{T}}
\newcommand{\N}{\mathbb{N}}
\newcommand{\HH}{\mathcal{H}}
\newcommand{\ee}{{\boldsymbol{e}}}
\title[Propagation of smallness in the plane]{Propagation of smallness for solutions of elliptic equations in the plane}
\author{Yuzhe Zhu} 
\address{Department of Pure Mathematics and Mathematical Statistics, University of Cambridge, Wilberforce Road, Cambridge CB3 0WA, UK}
\curraddr{Department of Mathematics, University of Chicago, Chicago, Illinois 60637, USA}
\email{yuzhezhu@uchicago.edu}
\begin{document}
\maketitle
	
\begin{abstract}
We explore quantitative propagation of smallness for solutions of two-dimensional elliptic equations from sets of positive $\delta$-dimensional Hausdorff content for any $\delta>0$. In particular, the gradients of solutions to divergence form equations with H\"older continuous coefficients, as well as those of nondivergence form equations with measurable coefficients, can be quantitatively estimated from the small sets. 
\end{abstract}

\section{Introduction}\label{introduction}
Let $B_R$ be the Euclidean ball in $\R^2$ centred at the origin with radius $R>0$. We aim in this work at studying quantitative propagation of smallness for solutions to the two-dimensional elliptic equation in divergence form
\begin{align}\label{eq1}
\nabla\cdot(A(z) \nabla u(z))=0 {\quad\rm in\ } B_4,  
\end{align}
or solutions to the equation in nondivergence form	\begin{align}\label{eqnon}
\sum\nolimits_{j,k=1}^2 a_{jk}(z)\partial_{x_j}\partial_{x_k}u(z)= 0 {\quad\rm in\ } B_4. 
\end{align}
Here $z=(x_1,x_2)\in\R^2$, and the real symmetric matrix $A(z)=(a_{jk}(z))_{2\times 2}$ is elliptic, that is, there is some constant $\Lambda>1$ such that 
\begin{align}\label{elliptic}
\Lambda^{-1}|\xi|^2 \le \sum\nolimits_{j,k=1}^2a_{jk}(z)\xi_j\xi_k \le \Lambda|\xi|^2 {\quad\rm for\ any\ } \xi\in\R^2,\ z\in B_4. 
\end{align}
When focusing on the properties of the gradient $\nabla u$ for $u$ solving \eqref{eq1}, we have to suppose that the leading coefficients are H\"older continuous, that is, there is some constant $\gamma\in(0,1]$ such that 
\begin{align}\label{Holder}
|a_{jk}(z)-a_{jk}(z')|\le \Lambda |z-z'|^\gamma {\quad\rm for\ any\ } j,k\in\{1,2\},\ z,z'\in B_4.  
\end{align}
	
We recall that solutions to \eqref{eq1} are those functions lying $H^1_{loc}(B_4)$ satisfying \eqref{eq1} in the sense of distributions, and solutions to \eqref{eqnon} are those functions lying $H^2_{loc}(B_4)$ satisfying \eqref{eqnon} almost everywhere. 
	
The goal of the present note is to show the propagation of smallness for solutions from any $\omega\subset B_1$ lying on a line with $\HH_\delta(\omega)>0$, and propagation of smallness for gradients from any $\Omega\subset B_1$ with $\HH_\delta(\Omega)>0$, for any fixed $\delta>0$. Here we denote by $\HH_\delta$ the $\delta$-dimensional Hausdorff content, that is, for a subset $E\subset\R^2$, $\HH_\delta(E):=\inf\big\{\sum\nolimits_{j\in\N}r_j^\delta:E\subset\cup_j(z_j+B_{r_j}),z_j\in\R^2\big\}$. The Hausdorff content is an outer measure that is always finite for bounded sets and smaller than the Hausdorff measure. It is worth noting that $\HH_\delta(E)=0$ if and only if the $\delta$-dimensional Hausdorff measure of $E$ vanishes. 

\begin{theorem}\label{rough-u}
Let $\delta>0$ and $\omega\subset B_1\cap l_0$ satisfy $\HH_\delta(\omega)>0$ for some line $l_0$ in $\R^2$ with the normal vector $\ee_0$. There exist some constants $C$ and $\alpha>0$ depending only on $\Lambda$, $\delta$ and $\HH_\delta(\omega)$ such that for any solution $u$ of \eqref{eq1} subject to \eqref{elliptic} with $A\nabla u\cdot\ee_0=0$ on $B_1\cap l_0$, we have 
\begin{align*}
\sup\nolimits_{B_1} |u| \le C \sup\nolimits_{\omega}|u|^\alpha \sup\nolimits_{B_2}|u|^{1-\alpha}. 
\end{align*}
\end{theorem}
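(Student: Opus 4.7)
I will reduce the claim to a classical one-variable propagation of smallness for holomorphic functions, via reflection and a symmetric Stoilow factorisation. After a rigid motion one may assume $l_0=\R\times\{0\}$ and $\ee_0=(0,1)$; write $R$ for both the reflection $(x_1,x_2)\mapsto(x_1,-x_2)$ and the matrix $\mathrm{diag}(1,-1)$. First, reflect $u$ through $l_0$ by setting
\[
\tilde A(z) := \begin{cases} A(z), & x_2\ge 0, \\ RA(Rz)R, & x_2<0, \end{cases} \qquad U(z) := \begin{cases} u(z), & x_2\ge 0, \\ u(Rz), & x_2<0. \end{cases}
\]
Then $\tilde A$ is uniformly elliptic with constant $\Lambda$ and satisfies $R\tilde A(Rz)R=\tilde A(z)$. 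A direct integration-by-parts check shows that the Neumann-type hypothesis $A\nabla u\cdot\ee_0=0$ on $B_1\cap l_0$ is exactly the transmission condition making $U\in H^1(B_1)$ a weak solution of $\nabla\cdot(\tilde A\nabla U)=0$ in $B_1$; by construction $U\circ R=U$, $U|_\omega=u|_\omega$, and $\sup_{B_2}|U|\le\sup_{B_2}|u|$.

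Next, I apply the Stoilow factorisation to produce a symmetric quasiconformal straightening. The complex function $f:=U+i\tilde v$, with $\tilde v$ an $\tilde A$-stream function of $U$ (so $\nabla\tilde v=J\tilde A\nabla U$, $J$ the rotation by $\pi/2$), is $K$-quasiregular on $B_1$ with $K=K(\Lambda)$ and satisfies a Beltrami equation $\bar\partial f=\tilde\mu\,\partial f$ whose coefficient $\tilde\mu$ inherits the conjugation symmetry $\tilde\mu(\bar z)=\overline{\tilde\mu(z)}$. This symmetry lets me pick a $K$-quasiconformal homeomorphism $\Phi:B_1\to\Omega$ with $\Phi(\bar z)=\overline{\Phi(z)}$, so $\Omega$ is symmetric about $\R$ and $\Phi(B_1\cap l_0)\subset\R$. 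Then $F:=f\circ\Phi^{-1}$ is holomorphic on $\Omega$; since the reflection symmetry of $\tilde A$ forces $\tilde a_{12}\equiv 0$ on $l_0$, the stream function $\tilde v$ is constant on $l_0\cap B_1$ and may be normalised to vanish there, yielding $F|_{\Phi(B_1\cap l_0)}\subset\R$ and $F(\bar\zeta)=\overline{F(\zeta)}$.

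Because $K$-quasiconformal maps are H\"older continuous, $\Phi(\omega)\subset\Omega\cap\R$ has $\HH_{\delta'}(\Phi(\omega))>0$ for some $\delta'=\delta'(\delta,\Lambda)>0$. After conformally mapping $\Omega$ onto the unit disc while preserving the real-axis symmetry, the problem reduces to a propagation-of-smallness estimate for a bounded holomorphic function on the disc from a subset of its real diameter of positive $\delta'$-dimensional Hausdorff content, which provides
\[
\sup_{\Omega'}|F| \le C\,\big(\sup_{\Phi(\omega)}|F|\big)^\beta\big(\sup_\Omega|F|\big)^{1-\beta}
\]
for any $\Omega'\Subset\Omega$, with $C,\beta>0$ depending only on $\delta$, $\HH_\delta(\omega)$ and $\Lambda$. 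Pulling back through $\Phi$ gives the estimate on some $B_r\subset B_1$, and the classical three-ball inequality for \eqref{eq1} then lifts it to $B_1$ with $\sup_{B_2}|u|$ as the a priori bound.

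The hard part is the reduced one-variable estimate, valid for every $\delta'>0$: planar propagation of smallness in the spirit of Logunov--Malinnikova only yields $\delta'>1$, whereas here the gain comes from the fact that the trace $F|_\R$ is (complex-)analytic in one real variable and thus has isolated zeros. The natural route is to combine a Cartan/Remez-type inequality in one variable---controlling $F$ on an interval from its size on a subset of positive $\delta'$-dimensional Hausdorff content by exploiting the zero distribution---with a Hadamard three-line / harmonic-measure argument propagating the bound from the diameter into the disc. A subsidiary technical point, arranging $\Phi$ to commute with the reflection so that $\Phi(B_1\cap l_0)\subset\R$, is handled by the conjugation symmetry of $\tilde\mu$.
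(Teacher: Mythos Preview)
Your plan is viable, and it shares with the paper the decisive observation that the conormal condition $A\nabla u\cdot\ee_0=0$ on $l_0$ forces the stream function to be constant along $l_0$ (this is what you extract, correctly, as ``$\tilde v$ constant on $l_0$'', though your justification via $\tilde a_{12}\equiv 0$ on $l_0$ is spurious---$l_0$ has measure zero and $\tilde A$ is only defined a.e.; the right reason is simply $\partial_{\ee_0^\perp}\tilde v=-\tilde A\nabla U\cdot\ee_0=0$). From that point on, however, the paper takes a much shorter route: it never reflects, never symmetrises the Beltrami coefficient, and never reduces to a one-variable problem. It proves a single Proposition for \emph{arbitrary} $\Omega\subset B_1$ with $\HH_\delta(\Omega)>0$, giving $\sup_{B_1}|u|\le C\sup_\Omega|u+iv|^\alpha\sup_{B_2}|u|^{1-\alpha}$, by applying the Stoilow factorisation $f=F\circ\chi$, factoring the zeros of $F$ as a polynomial $P$, using Harnack for $\log|F/P|$, Jensen's formula to bound $\deg P$, and the planar Remez inequality of Brudnyi/Friedland--Yomdin for $P$ on $\chi(\Omega)$. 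Theorem~\ref{rough-u} is then a one-line corollary: since $v\equiv 0$ on $\omega\subset l_0$, $\sup_\omega|u+iv|=\sup_\omega|u|$.

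What your extra machinery buys is that $\Phi(\omega)$ stays on $\R$, so you can in principle invoke one-variable Remez/Cartan lemmas; but the paper's point is that the \emph{two-dimensional} Remez inequality for holomorphic polynomials already works from any planar set of positive $\HH_\delta$-content, so there is nothing to gain from keeping the image on a line. Your ``hard part'' is therefore not harder than, but a special case of, the paper's Proposition. Two further technical points you should tighten: (i) your reflected $U$ is a weak solution only in $B_1$ (the Neumann data are given only on $B_1\cap l_0$), so $f=U+i\tilde v$ is quasiregular only in $B_1$ and you cannot put $\sup_\Omega|F|=\sup_{B_1}|f|$ on the right without losing control of $\tilde v$ near $\partial B_1$; you must buffer (work on $B_{1-\epsilon}$ and bound $\tilde v$ there via interior harmonic/holomorphic estimates, as the paper does in its estimate for $v\circ\chi^{-1}$). (ii) After conformally mapping $\Omega$ to the disc you again need H\"older control to preserve positive Hausdorff content of the image of $\Phi(\omega)$; this is fine but should be said.
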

	
\begin{theorem}\label{gradient}
Let $\delta>0$ and $\Omega\subset B_1$ satisfy $\HH_\delta(\Omega)>0$. There exist some constants $C$ and $\alpha>0$ depending only on $\Lambda$, $\delta$, $\gamma$ and $\HH_\delta(\Omega)$ such that for any solution $u$ of \eqref{eq1} subject to \eqref{elliptic} and \eqref{Holder}, we have 
\begin{align*}
\sup\nolimits_{B_1} |\nabla u| \le C \sup\nolimits_{\Omega}|\nabla u|^\alpha \sup\nolimits_{B_2}|\nabla u|^{1-\alpha} . 
\end{align*}
The same estimate holds for any $u$ solving \eqref{eqnon} subject to \eqref{elliptic} only. 
\end{theorem}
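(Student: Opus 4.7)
The plan is to identify $|\nabla u|$ with the modulus of a $K$-quasi-regular complex function on $B_3$, with $K=K(\Lambda)$, invoke the Stoilow factorization to reduce to a holomorphic function $F$, and apply propagation of smallness for holomorphic functions from sets of positive Hausdorff content. For \eqref{eq1} under \eqref{elliptic}--\eqref{Lip}, the classical device of Alessandrini's stream function $v$ (defined by $A\nabla u=(-\partial_{x_2}v,\partial_{x_1}v)$) makes $f:=u+iv$ satisfy a Beltrami-type equation $\partial_{\bar z} f=\mu_A\partial_z f+\nu_A\overline{\partial_z f}$ with $\bigl\||\mu_A|+|\nu_A|\bigr\|_\infty$ bounded away from $1$ by a constant depending only on $\Lambda$; the Lipschitz hypothesis transfers to $\mu_A,\nu_A$, and differentiating once shows that $w:=\partial_z f$ is itself quasi-regular (modulo harmless Vekua-type lower-order terms) with $|w|$ pointwise comparable to $|\nabla u|$. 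For \eqref{eqnon} under only \eqref{elliptic}, the Bers--Nirenberg representation for two-dimensional nondivergence equations directly gives that $w:=\partial_z u$ is $K$-quasi-regular with $|w|$ pointwise comparable to $|\nabla u|$, again with constants depending only on $\Lambda$.

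Next, via the Stoilow factorization I write $w=F\circ\chi$ on $B_3$, where $\chi$ is a normalized $K$-quasi-conformal homeomorphism (say $\chi(0)=0$, mapping $B_3$ onto a controlled quasi-disk) and $F$ is holomorphic on the image. Quasi-symmetry of $\chi$ traps $\chi(B_1)\subset D_1\subset D_2\subset\chi(B_2)$ between Euclidean disks of radii depending only on $\Lambda$, and Astala's area-distortion theorem in its Hausdorff-content form yields $\HH_{\delta'}(\chi(\Omega))\ge c>0$ with $\delta'=\delta'(\Lambda,\delta)>0$ and $c=c(\Lambda,\delta,\HH_\delta(\Omega))>0$. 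The pointwise identity $|w(z)|=|F(\chi(z))|$ then reduces the claim to a three-ball inequality for $F$ on the configuration $(D_1,D_2)$ with small set $\chi(\Omega)$.

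The remaining, purely holomorphic statement is a standard potential-theoretic fact: covering the sublevel set $\{|F|\le\varepsilon\sup_{D_2}|F|\}\cap D_1$ via Cartan's lemma by finitely many disks of small total radius, and applying the maximum principle for $\log|F|$ together with the hypothesis $\HH_{\delta'}(\chi(\Omega))\ge c$, produces $\sup_{D_1}|F|\le C\sup_{\chi(\Omega)}|F|^\alpha\sup_{D_2}|F|^{1-\alpha}$ with $C,\alpha$ depending only on the radii, $\delta'$, and $c$. Pulling this estimate back through $\chi$, using $|w|\sim|\nabla u|$ and the interior Caccioppoli/gradient bounds to compare $\sup_{B_2}|\nabla u|$ with $\sup_{B_3}|w|$, finishes the proof.

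The main obstacle is the quasi-regular reduction in the nondivergence case: one must implement Bers--Nirenberg with care so that $\partial_z u$ is $K$-quasi-regular with $K$ depending only on the ellipticity $\Lambda$, and that the pointwise comparison $|\partial_z u|\sim|\nabla u|$ holds uniformly in $A$. A secondary difficulty is the quantitative Hausdorff-content form of Astala's distortion together with uniformity in the normalization of $\chi$: it is this step that delivers the promised dependence of $C$ and $\alpha$ only on $\Lambda$, $\delta$, and $\HH_\delta(\Omega)$.
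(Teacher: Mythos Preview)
For the nondivergence equation your plan coincides with the paper's argument: both show that $g=\partial_{x_2}u+i\partial_{x_1}u$ (equivalently $\partial_z u$) is $K(\Lambda)$-quasiregular, factor $g=G\circ\chi$ via Stoilow, and run the holomorphic propagation-of-smallness estimate on $G$.

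For the divergence equation the paper takes a different and shorter route. Rather than differentiating the Beltrami equation for $f=u+iv$ and passing through Vekua theory, it quotes two estimates of Alessandrini \cite{Ale88} as a black box: with $z_1,\dots,z_K$ the critical points of $u$ in $B_1$, $P(z)=\prod_k(z-z_k)$, $h=|\nabla u|/|P|$, and $N=\log\bigl(\sup_{B_2}|\nabla u|/\sup_{B_1}|\nabla u|\bigr)$, one has $K\le C_\Lambda(1+N)$ and $\sup_{B_1}h\le e^{C_\Lambda(1+N)}\inf_{B_1}h$. The Remez-type polynomial inequality is then applied directly to $P$, and multiplying by the Harnack estimate for $h$ finishes in three lines; no quasiconformal change of variables or Hausdorff-content transfer is needed in this half. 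Your route is also viable, but note one slip: the Vekua lower-order terms arising from $\partial_z\mu_A,\partial_z\nu_A$ mean you cannot apply Stoilow directly to $w=\partial_z f$; you must first invoke the similarity principle to write $w=e^s(F\circ\chi)$ with $s$ bounded H\"older, after which $e^s$ is absorbed into the constants. A minor simplification: for the Hausdorff-content transfer under $\chi$ the paper just uses the bi-H\"older bound $|\chi(z)-\chi(z')|\ge M^{-1}|z-z'|^{1/\beta}$, which already yields $\HH_\delta(\Omega)\le C_*\HH_{\delta\beta}(\chi(\Omega))$; Astala's area-distortion theorem is not needed.
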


The above results are related to the unique continuation property for two-dimensional elliptic equations with bounded measurable coefficients. The properties of equations in nondivergence and divergence forms were proved in \cite{BN} (see also \cite{BJS}) and \cite{AM}, respectively. By contrast, it has been known since \cite{Plis} and \cite{Miller} that if the coefficients are H\"older continuous, then one is able to construct a nontrivial solution, which vanishes on an open subset, to elliptic equations in either non-divergence or divergence forms with dimensions greater than three. 
	
The case in the plane is special owing to the theory of quasiregular mappings and the representation theorem; see \cite{BN}, \cite{Bo} and \cite{AIM}. It reduces the analysis from solutions (or gradients) of elliptic equations to holomorphic functions. The main idea of the proof of propagation of smallness for holomorphic functions (see Proposition~\ref{holo} below) is based on the complex analysis arguments used in \cite{Kov} and \cite{AE}. Two basic observations are that the ratio of a holomorphic function $F$ and the polynomial sharing the same zeros as $F$ is holomorphic and non-vanishing, and the logarithm of modulus of a non-vanishing holomorphic function is harmonic. The properties of harmonic functions and the Remez-type inequality for holomorphic polynomials (in the version obtained in \cite{Bru1} and \cite{FY}) then allow us to derive propagation of smallness from wild sets. We also point out that once their gradients are well-defined, the number of critical points of solutions can be computed by counting the zeros of holomorphic functions, which allows for quantitative estimates of the size of the critical set. For the smooth coefficients case, the computation can be found in \cite{Ale88} or \cite{ZhuJ}, while discussions on the general case are presented in \cite{AM}. 

Let us briefly discuss the higher-dimensional case. Consider the solution $u$ to the elliptic equation $\nabla\cdot(A\nabla u)=0$ with Lipschitz coefficients in some domain of $\R^d$ for $d\ge3$. It was proved in \cite{LM18} that the propagation of smallness for $u$ holds from sets of positive $(d-1+\delta)$-Hausdorff content for any $\delta>0$. The result is sharp in the sense that zeros of harmonic functions in $\R^d$ may have positive $(d-1)$-Hausdorff content. The propagation of smallness for $|\nabla u|$ from sets of positive $(d-1-\epsilon)$-Hausdorff content for some (small) $\epsilon>0$ was also obtained in \cite{LM18}. As the zeros of $|\nabla u|$ was shown in \cite{NV} to have finite $(d-2)$-Hausdorff measure, it was conjectured in \cite{LM18} that the result for $|\nabla u|$ should be expected to hold from sets of positive $(d-2+\delta)$-Hausdorff content for any $\delta>0$. Some partial results in the analytic setting can be found in \cite{Malinnikova}. One may refer to the review \cite[Sections~7\&8]{LM-review} for further discussion in this direction. 

In the two-dimensional setting, Theorem~\ref{gradient} provides propagation of smallness for gradients of solutions to both divergence form equations with H\"older continuous coefficients and nondivergence form equations with measurable coefficients, from sets of positive $\delta$-dimensional Hausdorff content for any $\delta>0$. We note that the result of Theorem~\ref{gradient} for divergence form equations with Lipschitz coefficients has been obtained in the first version of this paper and in \cite{Foster}, which relied on the findings in \cite{Ale88} or the utilization of isothermal coordinates.. 

The outline of the article is as follows. We prove Theorem~\ref{rough-u} in Section~\ref{sec-rough} and Theorem~\ref{gradient} in Section~\ref{sec-gradient}. Some remarks on applications of Theorem~\ref{rough-u} to spectral inequalities and null controllability of heat equations with rough coefficients are presented in Appendix~\ref{sec-control}. 
	
In the rest of the article, the constant $C>0$, which will appear in the proofs below and depend only on $\Lambda$, $\delta$, $\gamma$ and $\HH_\delta(\omega)$ (or $\HH_\delta(\Omega)$), may be changed line by line. 

\section{Propagation of smallness with rough coefficients}\label{sec-rough}
This section is devoted to the proof of Theorem~\ref{rough-u}. We first review the notion of \emph{$A$-harmonic conjugate} (or sometimes called \emph{stream function}); one may refer to \cite{Rene} and \cite{AIM}. Consider the solution $u(z)$ of \eqref{eq1} subject to \eqref{elliptic}, and identify $z=x_1+ix_2$ for $x_1,x_2\in\R$. The $A$-harmonic conjugate $v$ of $u$ is defined by 
\begin{align}\label{vv}
\nabla v=\left(\begin{smallmatrix}0 & -1 \\1 & 0 \end{smallmatrix}\right) A\nabla u
\end{align}
so that $v$ verifies the following elliptic equation, 
\begin{align*}
\nabla\cdot\left(\det(A)^{-1}A\nabla v\right)
=\nabla\cdot\left(\left(\begin{smallmatrix}0 & -1 \\1 & 0 \end{smallmatrix}\right)\nabla u\right)=0,  
\end{align*}
where the fact that $\det(A)^{-1}A=\det(A)^{-1}A^T=-\left(\begin{smallmatrix}0 & -1 \\1 & 0 \end{smallmatrix}\right)A^{-1}\left(\begin{smallmatrix}0 & -1 \\1 & 0 \end{smallmatrix}\right)$ was used. Now that $v\in H^1_{loc}(B_4)$ is unique up to an additive constant, we may additionally assume that $v(z_0)=0$ for some $z_0\in B_1$. Define the function $f:B_4\rightarrow\C$ by 
\begin{align*}
f(z):=u(z)+iv(z). 
\end{align*}
By definition and \eqref{elliptic}, we have 
\begin{align*}
\begin{split}	
|Df|^2&= |\nabla u|^2+|A\nabla u|^2 
\le (\Lambda+\Lambda^{-1})A\nabla u\cdot\nabla u\\
&=(\Lambda+\Lambda^{-1})\left(\begin{smallmatrix}0 & 1 \\-1 & 0 \end{smallmatrix}\right)\nabla v\cdot\nabla u
= (\Lambda+\Lambda^{-1})Jf, 
\end{split}
\end{align*}
for the norm $|Df|^2:=|\nabla u|^2+|\nabla v|^2$ and the Jacobian $Jf:=\partial_{x_1}u\,\partial_{x_2}v-\partial_{x_2}u\,\partial_{x_1}v$. We are now positioned to recall the concept of quasiregular mapping. 

\begin{definition}
Let $U$ be an open set in $\C$ and $K\ge1$ be a constant. A complex-valued function $f\in H^1_{loc}(U)$ satisfying $|Df|^2\le (K+K^{-1})Jf$ almost everywhere in $U$ is said to be a $K$-\emph{quasiregular mapping} on $U$. 
\end{definition}

The following representation theorem plays a pivotal role in two-dimensional elliptic theory, which was first obtained in \cite[\S2~Representation~Theorem]{BN} and \cite[Theorem~4.4]{Bo}. For our intended applications, we refer to \cite[II 2.1]{Rene} or \cite[Corollary 5.5.3]{AIM}, and formulate it as follows, which is adequate for our needs. 

\begin{lemma}\label{representation}
Let the constant $K\ge1$. Any $K$-quasiregular mapping $f:B_4\rightarrow\C$ can be written as
\begin{align*}
f(z)=F\circ\chi(z).  
\end{align*}
Here $F$ is holomorphic in $B_4$, and $\chi:B_4\rightarrow B_4$, with $\chi(z_0)=0$, is a $K$-quasiconformal homeomorphism satisfying
\begin{align}\label{chi}
M^{-1}\left|z-z'\right|^{1/\beta} \le \left|\chi(z)-\chi(z')\right| \le M \left|z-z'\right|^{\beta} {\quad\rm for\ any\ }z,z'\in B_4, 
\end{align}
where the exponent $\beta=1/K\in(0,1]$ and the constant $M>1$ depends only on $K$. 
\end{lemma}

Let us turn to the main result of this section. It gives the corresponding quantitative propagation of smallness for holomorphic functions. 
\begin{proposition}\label{holo}
Let $\theta>0$, $R\in(0,1/4)$ and $G\subset B_R$ satisfy $\HH_\theta(G)>0$. There exists some constant $C>0$ depending only on $\theta$ and $\HH_\theta(G)$ such that for any holomorphic function $F$ in $B_{5R}$, we have 
\begin{align*}
\sup\nolimits_{B_R}|F|
\le \sup\nolimits_{G}|F|^\frac{1}{1+C} \sup\nolimits_{B_{4R}}|F|^\frac{C}{1+C}. 
\end{align*} 
\end{proposition}

\begin{proof}
Regarding to the holomorphic function $f(\zeta)$ for $\zeta\in B_4$, we assume that $N\in\N$ and $\zeta_1,\ldots,\zeta_N$ are the zeros of $F$ in $B_{2R}$, listed with multiplicities, and $\zeta_0\in \overline{B_R}$ satisfies $|F(\zeta_0)|=\sup_{B_R}|F(\zeta)|$. Consider the polynomial $P(\zeta)$ sharing the same zeros as $F(\zeta)$ in $B_{2R}$ so that their ratio $h(\zeta)$ is  holomorphic and non-vanishing in $B_{2R}$; more precisely, 
\begin{align*}
P(\zeta):=\prod\nolimits_{k=1}^N (\zeta-\zeta_k),\quad h(\zeta):=\frac{F(\zeta)}{P(\zeta)}. 
\end{align*}
It follows that $\log|h(\zeta)|$ is harmonic in $B_{2R}$, and 
\begin{align}\label{h0}
|h(\zeta_0)| \ge \sup\nolimits_{B_{R}}|F|. 
\end{align}
Moreover, by the maximum modulus principle, we have 
\begin{align}\label{hFP}
\begin{aligned}
\sup\nolimits_{B_{4R}}|h|
\le \sup\nolimits_{\partial B_{4R}}|F|\sup\nolimits_{\partial B_{4R}}|P^{-1}|
\le C^N \sup\nolimits_{B_{4R}}|F|. 
\end{aligned}
\end{align}
Applying the Harnack inequality to $\sup_{B_{4R}}\log|h|-\log|h(\zeta)|$ yields that 
\begin{align*}
\sup\nolimits_{B_{4R}}\log|h|-\inf\nolimits_{B_R}\log|h| 
\le C\sup\nolimits_{B_{4R}}\log|h|-C\log|h(\zeta_0)|, 
\end{align*}
which is equivalent to 
\begin{align*}
|h(\zeta_0)|^C\sup\nolimits_{B_{4R}}|h| \le \sup\nolimits_{B_{4R}}|h|^C \inf\nolimits_{B_R}|h|.  
\end{align*}
Combining this with \eqref{h0} and \eqref{hFP} implies that 
\begin{align}\label{FFh}
\sup\nolimits_{B_R}|F|^C\sup\nolimits_{B_{4R}}|h| 
\le C^N \sup\nolimits_{B_{4R}}|F|^C \inf\nolimits_{B_R}|h|. 
\end{align}
Recall the Remez-type inequality (see \cite[Theorem~4.3]{FY}) for holomorphic polynomials of degree $N$ that 
\begin{align}\label{Remez}
\sup\nolimits_{B_R}|P|\le \left(20/\HH_{\theta}(G)\right)^{N/\theta} \sup\nolimits_{G}|P|. 
\end{align}
Multiplying the inequalities \eqref{FFh} and \eqref{Remez}, we deduce 
\begin{align*}
\sup\nolimits_{B_R}|F|^{1+C}
&\le C^N \sup\nolimits_{B_{4R}}|F|^C \inf\nolimits_{B_R}|h|\, \sup\nolimits_{G}|P|\\
&\le C^N \sup\nolimits_{B_{4R}}|F|^C \sup\nolimits_{G}|F|. 
\end{align*}
Since Jensen's formula shows that the number of zeros of $F$ in $B_{2R}$ satisfies $$N\le C\sup\nolimits_{B_{4R}}\log\left(|F|/|F(\zeta_0)|\right),$$ 
we derive the desired result. 
\end{proof}

\begin{corollary}\label{rough}
Let $\delta>0$ and $\Omega\subset B_1$ satisfy $\HH_\delta(\Omega)>0$. There exist some constants $C,\alpha>0$ depending only on $\Lambda$, $\delta$ and $\HH_\delta(\Omega)$ such that for any solution $u$ of \eqref{eq1} subject to \eqref{elliptic} with its $A$-harmonic conjugate $v$ satisfying $v(z_0)=0$ for some $z_0\in B_1$, we have 
\begin{align*}
\sup\nolimits_{B_1} |u| \le C \sup\nolimits_{\Omega}|u+iv|^\alpha \sup\nolimits_{B_2}|u|^{1-\alpha}. 
\end{align*}
\end{corollary}
	
\begin{proof}
The derivation is reduced to the analysis of holomorphic functions with the aid of Lemma~\ref{representation}. With the complex functions $F$ and $\chi$ provided in Lemma~\ref{representation}, for $z=\chi^{-1}(\zeta)\in B_4$, we have 
\begin{align*}
F(\zeta)=f\circ\chi^{-1}(\zeta)=u\circ\chi^{-1}(\zeta)+iv\circ\chi^{-1}(\zeta). 
\end{align*} 
Without loss of generality, by the property \eqref{chi} of $\chi$, we may assume that $\chi(B_2)\subset B_2$, and $\chi(B_r),\chi(\Omega)\subset B_R$, where the constants $R,r\in(0,1/4)$ depend only on $\Lambda$. Furthermore, it follows from \eqref{chi} that there is some $C_*>0$ depending only on $\Lambda$ and $\delta$ such that 
\begin{align*}
\HH_\delta(\Omega)\le C_*\HH_{\delta\beta}(\chi(\Omega)). 
\end{align*}	
Applying Proposition~\ref{holo} with $\theta=\delta\beta$ and $G=\chi(\Omega)$ yields that 
\begin{align*}
\sup\nolimits_{B_r}|f|
\le C\sup\nolimits_{\Omega}|f|^\frac{1}{1+C}
\sup\nolimits_{\chi(B_2)}\big(|u\circ\chi^{-1}|+|v\circ\chi^{-1}|\big)^\frac{C}{1+C}. 
\end{align*} 
Since $u\circ\chi^{-1}(\zeta)+iv\circ\chi^{-1}(\zeta)$ is holomorphic and $v\circ\chi^{-1}=0$ at the point $\chi(z_0)\in\chi(B_1)$, we obtain from the Cauchy–Riemann equations and the gradient estimate for the harmonic function $u\circ\chi^{-1}(\zeta)$ that
\begin{align*}
\begin{aligned}
\sup\nolimits_{\chi(B_1)}|v\circ\chi^{-1}|
&\le C\sup\nolimits_{\chi(B_1)}|\nabla_\zeta (u\circ\chi^{-1})|
\le C \sup\nolimits_{\chi(B_2)}|u\circ\chi^{-1}|. 
\end{aligned}
\end{align*} 
Gathering the above two estimates, we obtain  
\begin{align*}
\sup\nolimits_{B_r} |u+iv| \le C \sup\nolimits_{\Omega}|u+iv|^{\frac{1}{1+C}} \sup\nolimits_{B_2}|u|^{\frac{C}{1+C}}. 
\end{align*}
We then conclude the desired result by a covering argument. 
\end{proof}
	
Theorem~\ref{rough-u} is then a direct consequence of the above result. 
	
\begin{proof}[Proof of Theorem~\ref{rough-u}]
We may assume that the $A$-harmonic conjugate $v$ of $u$ satisfies $v(z_0)=0$ for some $z_0\in B_1\cap l_0$. Since 
\begin{align*}
\nabla v\cdot\ee_0^\perp=\left(\begin{smallmatrix}0 & -1 \\1 & 0 \end{smallmatrix}\right) A\nabla u\cdot\ee_0^\perp=0 {\quad\rm on\ }B_1\cap l_0, 
\end{align*}
we deduce that $v=0$ on $B_1\cap l_0$. Corollary~\ref{rough} then implies the result as claimed. 
\end{proof}

\section{Propagation of smallness for gradients}\label{sec-gradient}
This section is devoted to the proof of Theorem~\ref{gradient}. In general, the supremum of $|\nabla u|$ for $u$ solving \eqref{eq1} subject to \eqref{elliptic} does not make sense, especially over sets of Hausdorff dimension less than one. We thus have to strengthen the regularity assumption on the coefficients of \eqref{eq1}. In particular, if the leading coefficients is H\"older continuous, then the classical Schauder theory says that the gradients of the solutions to elliptic equations in divergence form are H\"older continuous. In order to establish the propagation of smallness for gradients of $u$ solving \eqref{eq1} subject to \eqref{elliptic} and \eqref{Holder}, we will use a perturbation argument inspired by the Schauder theory. The proof presented in Subsection~\ref{divergence} below is essentially based on Proposition~\ref{holo}, the propagation of smallness for holomorphic functions. 
	
As for a solution of \eqref{eqnon} subject to \eqref{elliptic}, it is straightforward to check that its gradient forms a quasiregular mapping (see Subsection~\ref{nondivergence} below); one may also refer to \cite[\S 12.2]{GT} and \cite[\S~II.1]{Rene}. It then follows from the same analysis of holomorphic functions as in the previous section that Theorem~\ref{gradient} holds for solutions to \eqref{eqnon} with rough coefficients.

\subsection{Elliptic equations in divergence form}\label{divergence}
We are in a position to present the proof of Theorem~\ref{gradient} for the solution $u$ of \eqref{eq1} subject to \eqref{elliptic} and \eqref{Holder}. 

Recall that $v$ is the $A$-harmonic conjugate of $u$ and $f=u+iv$. The functions $u$ and $v$ solve the elliptic equations \eqref{eq1} and \eqref{vv}, respectively. 
Without loss of generality, we may assume that 
\begin{align*}
\sup\nolimits_{B_2}|\nabla f|=1. 
\end{align*} 
In the light of the H\"older condition \eqref{Holder}, the Schauder estimate (see for instance \cite[Corollary~6.3]{GT}) implies that 
\begin{align}\label{Schauder}
\|\nabla f\|_{C^\gamma(B_{3/2})}\le C. 
\end{align}
Let $\eta\in(0,1/4)$ be a constant to be determined. We may pick $N_\eta$ balls of radius $\eta$ to cover $B_1$; for instance, $N_\eta=4/\eta^2$. Then there is a ball $B_\eta(z_*)$ centred at some $z_*\in B_1$ such that
\begin{align}\label{measure-B}
\HH_\delta(B_\eta(z_*)\cap\Omega)\ge \HH_\delta(\Omega)/N_\eta=\eta^2\HH_\delta(\Omega)/4. 
\end{align}
By rotating and dilating the coordinates, we can assume that $A(z_*)=I_{2\times 2}$. Consequently, by the H\"older condition \eqref{Holder}, we have
\begin{align}\label{elliptic-pert}
(1+C_\Lambda\eta^\gamma)^{-1}|\xi|^2 \le A\xi\cdot\xi\le (1+C_\Lambda\eta^\gamma)|\xi|^2 {\quad\rm for\ any\ }\xi\in\R^2,\ z\in B_{2\eta}(z_*), 
\end{align}
for some constant $C_\Lambda>1$ depending only on $\Lambda$. We are going to derive local estimates in $B_{2\eta}(z_*)$. Let $\hat{z}\in B_\eta(z_*)\cap\Omega$ be fixed and let $\varepsilon\in(0,\eta]$ so that $B_\varepsilon(\hat{z})\subset B_{2\eta}(z_*)$. By \eqref{Schauder}, we have 
\begin{align*}
\begin{aligned}
\sup\nolimits_{B_\varepsilon(\hat{z})}|f-f(\hat{z})|
\le \varepsilon\sup\nolimits_{B_\varepsilon(\hat{z})}|\nabla f|
\le C\varepsilon|\nabla f(\hat{z})|+C\varepsilon^{1+\gamma}. 
\end{aligned}
\end{align*}
Applying Lemma~\ref{representation} in $B_{2\eta}(z_*)$, we have the representation formula $f=F\circ\chi$ for a holomorphic function $F$ and a quasiconformal homeomorphism $\chi$ such that $\chi(z_*)=z_*$. In particular, the application with the elliptic condition \eqref{elliptic-pert} yields a constant $\beta:=(1+C_\Lambda\eta^\gamma)^{-1}$ ensuring the property \eqref{chi}. Hence, for any $\varepsilon\in(0,\eta]$, 
\begin{align*}
\sup\nolimits_{B_{r_\varepsilon}(\chi(\hat{z}))}|F-F\circ\chi(\hat{z})|
\le C\sup\nolimits_{B_\varepsilon(\hat{z})}|f-f(\hat{z})|, 
\end{align*}
where we set $r_\varepsilon:=c\varepsilon^{1/\beta}$ for some constant $c\in(0,1)$ depending only on $\Lambda$. 
By the gradient estimate for the holomorphic function, we have
\begin{align*}
|\nabla F(\chi(\hat{z}))|
\le Cr_\varepsilon^{-1}\sup\nolimits_{B_{r_\varepsilon}(\chi(\hat{z}))}|F-F\circ\chi(\hat{z})|. 
\end{align*}
Gathering the above three estimates, we obtain 
\begin{align*}
|\nabla F(\chi(\hat{z}))|
\le C\varepsilon^{1-1/\beta}|\nabla f(\hat{z})| +C\varepsilon^{1+\gamma-1/\beta}. 
\end{align*}
Since $1+\gamma/2-1/\beta=\gamma/2-C_\Lambda\eta^\gamma$, we choose $\eta\in(0,1/4)$ such that $C_\Lambda\eta^\gamma=\gamma/2$ which implies that $1-1/\beta=-\gamma/2$, and $\eta$ depends only on $\Lambda$ and $\gamma$. Then, taking $\varepsilon:=\min\{\eta,\sup\nolimits_\Omega|\nabla f|\}$, we deduce that for $\gamma':=\min\{1-\gamma/2,\gamma/2\}$,  
\begin{align*}
\begin{aligned}
|\nabla F(\chi(\hat{z}))|
\le C\varepsilon^{-\gamma/2}|\nabla f(\hat{z})|+C\varepsilon^{\gamma/2}
\le C\sup\nolimits_\Omega|\nabla f|^{\gamma'}. 
\end{aligned}
\end{align*}
Now that $\hat{z}\in B_\eta(z_*)\cap\Omega$ is arbitrary and $\chi(z_*)=z_*$, armed with \eqref{measure-B}, applying Proposition~\ref{holo} to $\nabla F$ with $G=\chi(B_\eta(z_*)\cap\Omega)$ implies that, for some $R,C>0$ depending only on $\Lambda$ and $\eta$, 
\begin{align*}
\begin{aligned}
\sup\nolimits_{B_R(z_*)}|\nabla F|
\le \sup\nolimits_{\chi(B_\eta(z_*)\cap\Omega)}|\nabla F|^\frac{1}{1+C}
\le C\sup\nolimits_{\Omega}|\nabla f|^\frac{\gamma'}{1+C}. 
\end{aligned}
\end{align*}
In view of the gradient estimate for $f$ and \eqref{chi} with $\chi(z_*)=z_*$, we have 
\begin{align*}
\begin{aligned}
\sup\nolimits_{B_r(z_*)}|\nabla f|
&\le C \sup\nolimits_{B_{2r}(z_*)}|f-f(z_*)|\\
&\le C \sup\nolimits_{B_R(z_*)}|F-F(z_*)| 
\le C \sup\nolimits_{B_R(z_*)}|\nabla F|, 
\end{aligned}
\end{align*}
where the constant $r>0$ depends only on $\Lambda$ and $\gamma$. We thus conclude that  
\begin{align*}
\sup\nolimits_{B_r(z_*)}|\nabla f|
\le C\sup\nolimits_{\Omega}|\nabla f|^\frac{\gamma'}{1+C}. 
\end{align*}
Rescaling back and recalling $f=u+iv$, as well as the definition of $v$, we arrive at 
\begin{align*}
\sup\nolimits_{B_r(z_*)}|\nabla u|
&\le C\sup\nolimits_{\Omega}(|\nabla u|+|\nabla v|)^\frac{\gamma'}{1+C}\sup\nolimits_{B_2}(|\nabla u|+|\nabla v|)^\frac{1+C-\gamma'}{1+C}\\
&\le C\sup\nolimits_{\Omega}|\nabla u|^\frac{\gamma'}{1+C}\sup\nolimits_{B_2}|\nabla u|^\frac{1+C-\gamma'}{1+C}. 
\end{align*}
The desired result then follows from a covering argument.

\subsection{Elliptic equations in nondivergence form}\label{nondivergence}
Let us start by reviewing some basic facts from \cite[\S 12.2]{GT} (see also \cite[\S~II.1]{Rene}). Consider the solution $u(z)$ of \eqref{eqnon} subject to \eqref{elliptic} with $z=x_1+ix_2$ for $x_1,x_2\in\R$. Define 
\begin{equation*}\label{pq}
p(z):=\partial_{x_1}u(z),\quad q(z):=\partial_{x_2}u(z),\quad g(z):=q(z)+ip(z). 
\end{equation*}
Since $\partial_{x_1}q=\partial_{x_2}p$, multiplying \eqref{eqnon} by $\partial_{x_2}q$ and $\partial_{x_1}p$ yields that 
\begin{align*}
a_{11}(\partial_{x_1}q)^2 +2a_{12}\partial_{x_1}q\,\partial_{x_2}q + a_{22}(\partial_{x_2}q)^2 
&= a_{11}(\partial_{x_1}q\,\partial_{x_2}p - \partial_{x_2}q\,\partial_{x_1}p),\\
a_{11}(\partial_{x_1}p)^2 +2a_{12}\partial_{x_1}p\,\partial_{x_2}p + a_{22}(\partial_{x_2}p)^2 
&= a_{22}(\partial_{x_1}q\,\partial_{x_2}p - \partial_{x_2}q\,\partial_{x_1}p). 
\end{align*}
Due to \eqref{elliptic}, we have $|\partial_{x_1}g|^2+|\partial_{x_2}g|^2\le (1+\Lambda^2) Jg$ for the Jacobian $Jg:=\partial_{x_1}q\,\partial_{x_2}p-\partial_{x_2}q\,\partial_{x_1}p$. It turns out that $g\in H^1_{loc}(B_4)$ is quasiregular. By the representation theorem (Lemma~\ref{representation}), we have \begin{align*}
g(z)=F\circ\chi(z),
\end{align*}
where $F$ is holomorphic in $B_4$, and $\chi:B_4\rightarrow B_4$ is a homeomorphism satisfying $\chi(0)=0$ and \eqref{chi}. The argument presented in Section~\ref{sec-rough} can be thus applied in this setting. Indeed, in view of Proposition~\ref{holo}, we have    
\begin{align*}
\sup\nolimits_{B_R}|F|
\le \sup\nolimits_{\chi(\Omega)}|F|^\frac{1}{1+C} \sup\nolimits_{B_{4R}}|F|^\frac{C}{1+C}. 
\end{align*} 
Now that $F=(\partial_{x_1}u)\circ\chi^{-1}+i(\partial_{x_2}u)\circ\chi^{-1}$, we are able to conclude Theorem~\ref{gradient} for solutions of \eqref{eqnon} subject to \eqref{elliptic}.

\appendix
\section{Spectral inequality and null controllability}\label{sec-control}
We briefly discuss the application of propagation of smallness, Theorem ~\ref{rough-u}, to spectral inequalities and null controllability of heat equations with bounded measurable coefficients. 

Let $\T$ be the periodic unit interval, and the function $a:\T\rightarrow\R$ be measurable and satisfy $\Lambda^{-1}\le a(x)\le\Lambda$ in $\T$ for some constant $\Lambda>1$. Consider the one-dimensional eigenvalue problem 
\begin{align*}
-\partial_x \left(a(x)\,\partial_x e_k(x)\right)=\lambda_ke_k(x) {\quad\rm in\ }\T. 
\end{align*} 
Then the family of eigenfunctions $\{e_k(x)\}_{k\in\N}$ forms an orthonormal basis of $L^2(\T)$, and the family of eigenvalues $\{\lambda_k\}_{k\in\N}$ satisfies $\lambda_k\ge0$ for any $k\in\N$ and $\lambda_k\rightarrow\infty$ as $k\rightarrow\infty$. Denote by $\Pi_\lambda$ the orthogonal projection onto the space spanned by $\{e_k:\lambda_k\le\lambda\}$. We have the following spectral inequality. 
	
\begin{proposition}\label{spectral}
Let $\delta>0$ and $\omega\subset\T$ satisfy $\HH_{\delta}(\omega)>0$. There exists some constant $C>0$ depending only on $\Lambda$, $\delta$ and $\HH_{\delta}(\omega)$ such that for any $\phi\in L^2(\T)$ and any $\lambda\ge1$, we have 
\begin{align*}
\sup\nolimits_{\T} |\Pi_\lambda\phi| \le e^{C\sqrt{\lambda}} \sup\nolimits_{\omega}|\Pi_\lambda\phi|. 
\end{align*}
\end{proposition}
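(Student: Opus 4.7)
The plan is a Lebeau-Robbiano style lifting: I would promote $\Pi_\lambda\phi$ to the trace on $\{y=0\}$ of a two-dimensional elliptic function $U$ on the cylinder $\T\times\R$, for which the co-normal derivative along $\{y=0\}$ vanishes, and then apply Theorem~\ref{rough-u} with $l_0=\{y=0\}$ and $\ee_0=(0,1)$.

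Write $\Pi_\lambda\phi=\sum_{\lambda_k\le\lambda}c_ke_k$ and set
\[
 U(x,y):=\sum\nolimits_{\lambda_k\le\lambda}c_ke_k(x)\cosh(\sqrt{\lambda_k}\,y).
\]
Using $-\partial_x(a\partial_xe_k)=\lambda_ke_k$, a direct computation gives $\partial_x(a(x)\partial_xU)+\partial_y^2U=0$, so $U$ solves $\nabla\cdot(A\nabla U)=0$ with $A(x,y)=\mathrm{diag}(a(x),1)$; extending $a$ periodically from $\T$ to $\R$, the equation holds on every ball of $\R^2$ with ellipticity constant depending only on $\Lambda$. Moreover $\partial_yU(x,0)=0$ is exactly $A\nabla U\cdot\ee_0=0$ on $l_0$, and $U(x,0)=\Pi_\lambda\phi(x)$.

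To invoke Theorem~\ref{rough-u} I need control of $U$ on a larger ball. By orthogonality of $\{e_k\}$ in $L^2(\T)$ and $|\cosh t|\le e^{|t|}$, one has $\|U(\cdot,y)\|_{L^2(\T)}\le e^{\sqrt\lambda\,|y|}\|\Pi_\lambda\phi\|_{L^2(\T)}$; integrating over $y\in(-3,3)$ yields $\|U\|_{L^2(B_3)}\le e^{C\sqrt\lambda}\|\Pi_\lambda\phi\|_{L^2(\T)}$. The De Giorgi-Nash-Moser $L^\infty$--$L^2$ estimate for $\nabla\cdot(A\nabla U)=0$ then gives $\sup_{B_2}|U|\le e^{C\sqrt\lambda}\sup_\T|\Pi_\lambda\phi|$, using $\|\Pi_\lambda\phi\|_{L^2(\T)}\le\sup_\T|\Pi_\lambda\phi|$ since $\T$ has unit measure. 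Identifying $\T$ with $(-1/2,1/2]$ places the entire torus inside $B_1\cap l_0$, so $\sup_\T|\Pi_\lambda\phi|\le\sup_{B_1}|U|$.

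Combining Theorem~\ref{rough-u} with the two preceding observations,
\[
 \sup\nolimits_\T|\Pi_\lambda\phi|\le\sup\nolimits_{B_1}|U|\le C\bigl(\sup\nolimits_\omega|\Pi_\lambda\phi|\bigr)^\alpha\bigl(\sup\nolimits_{B_2}|U|\bigr)^{1-\alpha}\le C\,e^{C(1-\alpha)\sqrt\lambda}\bigl(\sup\nolimits_\omega|\Pi_\lambda\phi|\bigr)^\alpha\bigl(\sup\nolimits_\T|\Pi_\lambda\phi|\bigr)^{1-\alpha},
\]
and absorbing the trailing factor to the left produces $\sup_\T|\Pi_\lambda\phi|\le e^{C\sqrt\lambda}\sup_\omega|\Pi_\lambda\phi|$. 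The main care point is keeping all constants independent of $\lambda$ except through the single $e^{C\sqrt\lambda}$ factor, which is why I would rely on the Moser sup--$L^2$ estimate for $U$ instead of sup-norm bounds on individual eigenfunctions $e_k$, and why I would arrange the geometry so that the whole torus fits in $B_1\cap l_0$ and a single application of Theorem~\ref{rough-u} suffices.
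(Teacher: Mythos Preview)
Your proposal is correct and follows essentially the same approach as the paper: both lift $\Pi_\lambda\phi$ to the two-dimensional solution $U(x,y)=\sum c_k e_k(x)\cosh(\sqrt{\lambda_k}\,y)$, verify the co-normal condition on $\{y=0\}$, apply Theorem~\ref{rough-u}, and then absorb the factor $(\sup_\T|\Pi_\lambda\phi|)^{1-\alpha}$ into the left-hand side. The only minor variation is in bounding $\sup_{B_2}|U|$: you use the De~Giorgi--Nash--Moser $L^\infty$--$L^2$ estimate, whereas the paper uses the one-dimensional Sobolev embedding $H^1_x(\T)\hookrightarrow L^\infty_x(\T)$ together with $\|\partial_x e_k\|_{L^2}^2\le\Lambda\lambda_k$; both routes yield the same $e^{C\sqrt\lambda}$ factor.
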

	
\begin{proof}
We may write $\Pi_\lambda\phi(x)=\sum\nolimits_{\lambda_k\le\lambda} \phi_ke_k(x)$ for $\phi_k\in\R$. The function
\begin{align*}
u(x,y):=\sum\nolimits_{\lambda_k\le\lambda} \phi_ke_k(x) \cosh(\sqrt{\lambda_k}y), \quad (x,y)\in\T\times(-4,4),
\end{align*} 
satisfies $\partial_yu(x,0)=0$ and $u(x,0)=\Pi_\lambda\phi(x)$ for $x\in\T$, and 
\begin{align*}
\partial_x\left(a(x)\,\partial_x u\right) +\partial_y^2u=0 {\quad\rm in\ }\T\times(-4,4).  
\end{align*}
Applying Theorem~\ref{rough-u} to $u$ yields that for some constants $C,\alpha>0$, 
\begin{align*}
\begin{aligned}
\sup\nolimits_\T|\Pi_\lambda\phi| 
\le \sup\nolimits_{\T\times(-1,1)}|u| 
\le C \sup\nolimits_{\omega}|\Pi_\lambda\phi|^\alpha \sup\nolimits_{\T\times(-2,2)}|u|^{1-\alpha}. 
\end{aligned}
\end{align*}
By the Sobolev inequality and the fact that $\|\partial_xe_k\|_{L^2(\T)}^2\le\Lambda\lambda_k\|e_k\|_{L^2(\T)}^2$, we have 
\begin{align*}
\begin{aligned}
\sup \nolimits_{\T\times(-2,2)}|u|
\le  C\|u\|_{L^\infty_y((-2,2),H_x^1(\T))}
\le  e^{C\sqrt{\lambda}}\|\Pi_\lambda\phi\|_{L^2(\T)}. 
\end{aligned}
\end{align*}
We then conclude the proof by gathering the above two estimates. 
\end{proof}
	
The problem of null controllability of multi-dimensional heat equations with Lipschitz coefficients from open control sets has been intensively developed since \cite{Imanuvilov} and \cite{LR}. The null controllability of one-dimensional heat equations with rough coefficients from open sets was proved in \cite{AE}, and the result from sets of positive Lebesgue measure was given in \cite{ApraizE}. Proposition~\ref{spectral} would imply the result from the control set $\omega$ satisfying $\HH_{\delta}(\omega)>0$ for any fixed $\delta>0$.  
	
\begin{proposition}\label{control}
Let $T>0$, $\delta>0$, and $\omega$ be a closed subset of $\T$ with $\HH_{\delta}(\omega)>0$. For any $w_0\in L^2(\T)$, there exists a Borel measure $m(t,x)$ supported in $(0,T)\times\T$ such that the solution $w(t,x)$ to 
\begin{align*}\label{control-heat}
\partial_tw(t,x) = \partial_x\left(a(x)\,\partial_x w(t,x)\right)  + m(t,x)\mathds{1}_\omega {\quad\rm in\ }(0,T)\times\T, 
\end{align*}
associated with the initial data $w(0,\cdot)=w_0$ in $\T$, satisfies $w(T,\cdot)=0$ in $\T$. 
\end{proposition}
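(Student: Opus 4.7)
The plan is to derive this as a standard consequence of the spectral inequality of Proposition~\ref{spectral} via the Lebeau--Robbiano iteration, the only subtlety being that the control produced is a Borel measure rather than an $L^2$-function---this is forced by $\omega$ possibly having empty interior and by the $L^\infty$-norm that appears in Proposition~\ref{spectral}.

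The first step I would carry out is a partial null-controllability lemma: for every $\lambda\ge1$, every $\tau>0$, and every $g\in L^2(\T)$, there exists a Borel measure $\nu$ supported in $(0,\tau)\times\omega$, with total variation bounded by $e^{C\sqrt\lambda}\|g\|_{L^2(\T)}$, such that the solution $w$ of $\partial_tw=\partial_x(a\,\partial_xw)+\nu\mathds{1}_\omega$ with $w(0,\cdot)=g$ satisfies $\Pi_\lambda w(\tau,\cdot)=0$. By the Hahn--Banach theorem (i.e., the Hilbert Uniqueness Method), this reduces to the observability inequality
\begin{equation*}
\|\Pi_\lambda\phi\|_{L^2(\T)}\le e^{C\sqrt\lambda}\sup\nolimits_{(t,x)\in(0,\tau)\times\omega}\bigl|S_{\tau-t}\Pi_\lambda\phi(x)\bigr|,
\end{equation*}
where $S_t=e^{tL}$ denotes the heat semigroup associated with $L=\partial_x(a\,\partial_x)$. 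This inequality follows by letting $t\to\tau^-$, using the trivial bound $\|\cdot\|_{L^2(\T)}\le\|\cdot\|_{L^\infty(\T)}$, and invoking Proposition~\ref{spectral} on the low-frequency function $S_{\tau-t}\Pi_\lambda\phi$; the presence of the $L^\infty$-norm on $\omega$ in the pairing is precisely what forces the dual control to live in the space of Borel measures, via Riesz representation on $C(\omega)$.

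The second step is the Lebeau--Robbiano iteration. Partition $[0,T]$ into subintervals $[a_n,a_{n+1}]$ with $a_0=0$, $a_n\uparrow T$, and $\tau_n:=a_{n+1}-a_n$ a geometric sequence summing to $T$. On the first half of each $[a_n,a_{n+1}]$ I would apply the partial control of step one at frequency $\lambda_n$ to annihilate $\Pi_{\lambda_n}w$ at the midpoint; on the second half no control is applied, so that parabolic smoothing damps the high-frequency tail $(I-\Pi_{\lambda_n})w$ by a factor $e^{-\lambda_n\tau_n/2}$. Choosing $\lambda_n$ and $\tau_n$ so that $\lambda_n\tau_n\gg\sqrt{\lambda_n}$ (e.g.\ $\lambda_n=\rho^n$ and $\tau_n\sim\rho^{-n/2}$ for $\rho$ large), the cost $e^{C\sqrt{\lambda_n}}$ at step $n$ is dominated by the smoothing gain from step $n-1$, yielding geometric decay of $\|w(a_n,\cdot)\|_{L^2(\T)}$ together with a summable total control mass $\sum_n\|\nu_n\|_{\mathcal M}<\infty$. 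Assembling the $\nu_n$'s gives the desired $\mu\in\mathcal M((0,T)\times\omega)$, and $w(T,\cdot)=0$ in $\T$ follows by $L^2(\T)$-continuity of $t\mapsto w(t,\cdot)$ up to $T$.

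The main technical obstacle will be to make sense of the heat equation with a measure-valued source and to verify that the smoothing estimates on the free intervals remain effective when the previous control has injected data in only a negative Sobolev space. This is classical parabolic theory: a source in $L^1_t\mathcal{M}_x$ together with $L^2_x$ initial data produces a solution in $C_tH^{-s}_x$ for any $s>1/2$, and the heat semigroup restricted to frequencies above $\lambda_n$ maps $H^{-s}$ to $L^2$ with norm $\lesssim\tau_n^{-s/2}e^{-\lambda_n\tau_n/2}$, a polynomial loss easily absorbed by the exponential gain. The remaining HUM bookkeeping---duality with $C([0,\tau]\times\omega)$ to obtain the partial control as an element of $\mathcal{M}((0,\tau)\times\omega)$---is a routine variational argument once the observability above is in hand.
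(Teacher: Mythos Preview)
Your proposal is correct and follows precisely the route the paper indicates: the paper does not write out a proof but states that the result follows by combining the spectral inequality of Proposition~\ref{spectral}, the decay of the semigroup $e^{tL}$, and the duality argument of \cite[Section~5]{BurqM}, which is exactly the Lebeau--Robbiano iteration with HUM and measure-valued controls that you outline. Your handling of the measure-valued source (controls in $\mathcal{M}$ paired with $C^0$, state in $H^{-s}$ at the end of each active phase, polynomial loss absorbed by the exponential smoothing on the passive phase) matches the Burq--Moyano framework the paper defers to.
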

	
The proof of the above result consists in the spectral inequality (Proposition~\ref{spectral}), the decay property of the semigroup $e^{t\partial_x(a(x)\partial_x\cdot)}$, and the duality argument (see for instance \cite[Section~5]{BurqM}). Since it is now quite standard to combine these ingredients, we omit the proof; one may refer to \cite{BurqM} for details. One is also able to generalize the null controllability result to more general one-dimensional heat equations (with lower order terms) associated with certain boundary conditions from space-time control sets; see \cite{AE} and \cite{BurqM}.

\end{document}